\numberwithin{equation}{section}
\title{When Does the Set of $(a, b, c)$-Core Partitions Have a Unique Maximal Element?}
\author{Amol Aggarwal} 
\begin{document}

\maketitle

\begin{abstract}
In 2007, Olsson and Stanton gave an explicit form for the largest $(a, b)$-core partition, for any relatively prime positive integers $a$ and $b$, and asked whether there exists an $(a, b)$-core that contains all other $(a, b)$-cores as subpartitions; this question was answered in the affirmative first by Vandehey and later by Fayers independently. In this paper we investigate a generalization of this question, which was originally posed by Fayers: for what triples of positive integers $(a, b, c)$ does there exist an $(a, b, c)$-core that contains all other $(a, b, c)$-cores as subpartitions? We completely answer this question when $a$, $b$, and $c$ are pairwise relatively prime; we then use this to generalize the result of Olsson and Stanton. 
\end{abstract}

\newtheorem{thm}{Theorem}[section]
\newtheorem{prop}[thm]{Proposition}
\newtheorem{lem}[thm]{Lemma}
\newtheorem{cor}[thm]{Corollary}
\newtheorem{que}[thm]{Question}

\section{Introduction}

A {\itshape partition} is a finite, nonincreasing sequence $\lambda = (\lambda_1, \lambda_2, \ldots , \lambda_r)$ of positive integers. The sum $\sum_{i = 1}^r \lambda_i$ is the {\itshape size} of $\lambda$ and is denoted by $|\lambda|$; the integer $r$ is the {\itshape length} of $\lambda$. A partition $\mu = (\mu_1, \mu_2, \ldots , \mu_s)$ is a {\itshape subpartition} of $\lambda$ if $s \le r$ and $\mu_i \le \lambda_i$ for each integer $i \in [1, s]$; in this case, we say that $\mu \subseteq \lambda$.  

We may represent $\lambda$ by a {\itshape Young diagram}, which is a collection of $r$ left-justified rows of cells with $\lambda_i$ cells in row $i$. The {\itshape hook length} of any cell $C$ in the Young diagram is defined to be the number of cells to the right of, below, or equal to $C$. For instance, Figure 1 shows the Young diagrams and hook lengths of the partitions $(6, 4, 2, 2, 1, 1)$ and $(5, 3, 1, 1)$. Let $\beta (\lambda)$ denote the set of hook lengths in the leftmost column of the Young tableaux associated with $\lambda$; equivalently, $\beta (\lambda) = (\lambda_1 + r - 1, \lambda_2 + r - 2, \ldots , \lambda_r)$. For instance, Figure 1 shows that $\beta (6, 4, 2, 2, 1, 1) = \{ 11, 8, 5, 4, 2, 1 \}$ and $\beta (5, 3, 1, 1) = (8, 5, 2, 1)$. 

For any set of positive integers $A = \{ a_1, a_2, \ldots , a_k \}$, a partition is an {\itshape $A$-core} if no cell of its Young diagram has hook length in $A$. Let the set of $A$-cores be $C(A)$; Figure 1 shows that $(6, 4, 2, 2, 1, 1) \in C(3, 7)$ and $(5, 3, 1, 1) \in C(3, 7, 11)$. 

Core partitions are known to be related to representations of the symmetric group; for instance, Olsson and Stanton use simultaneous core partitions in [11] to prove the Navarro-Willems conjecture for symmetric groups. Core partitions are also known to be related to the alcove geometry for certain types of Coxeter groups (see [4, 8, 9]). Recently, there has been a growing interest in simultaneous core partitions because of their relationship with numerical semigroups (see [1, 2, 4, 14, 16]). 

During the past decade, combinatorialists have studied properties of $C(A)$ when $|A| = 2$ (see [1, 3, 4, 5, 6, 8, 9, 10, 11, 14, 15]). For instance, Anderson showed that $|C(a, b)| = \binom{a + b}{a} / (a + b)$ if $a$ and $b$ are relatively prime; in particular, there are finitely many $(a, b)$-cores [3]. 

This implies that there is an $(a, b)$-core of maximum size. Auckerman, Kane, and Sze conjectured in [5] that this size is $(a^2 - 1)(b^2 - 1) / 24$. This was verified in 2007 by Olsson and Stanton, who also found the core of this size explicitly in terms of $a$ and $b$ [11]. Specifically, they established the following result. 

\begin{thm}
\label{largestcore2}

For any relatively prime positive integers $a$ and $b$, there is a unique $(a, b)$-core $\kappa_{a, b}$ of maximum size; a positive integer is in $\beta (\kappa_{a, b})$ if and only if it is of the form $ab - ia - jb$ for some positive integers $i$ and $j$. 
\end{thm}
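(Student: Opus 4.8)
The plan is to translate everything into the language of $\beta$-sets and the abacus, where $(a,b)$-cores acquire a clean arithmetic description. Recall the standard abacus interpretation (which I would justify in a short paragraph): a partition $\lambda$ is an $(a,b)$-core if and only if $\beta(\lambda)$ contains neither $a$ nor $b$ and is closed under subtracting $a$ and $b$, meaning that $x\in\beta(\lambda)$ with $x>a$ forces $x-a\in\beta(\lambda)$, and likewise for $b$. The quickest justification places beads at the positions of $\beta(\lambda)$ padded below by a floor block: a hook of length $h$ corresponds to a bead sitting $h$ units above an empty site, so being an $a$-core means no bead lies $a$ above an empty site. I would also record that $|\lambda|=\sum_{x\in\beta(\lambda)}x-\binom{|\beta(\lambda)|}{2}$.

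Next I would identify the candidate set arithmetically. Let $S=\langle a,b\rangle$ be the numerical semigroup of nonnegative integer combinations of $a$ and $b$, let $F=ab-a-b$, and let $G$ be the set of gaps (positive integers not in $S$). I claim that $B^*:=\{ab-ia-jb:i,j\ge 1,\ ab-ia-jb>0\}$ equals $G$. One inclusion uses that the only ways to write $ab=\alpha a+\beta b$ with $\alpha,\beta\ge 0$ are $(\alpha,\beta)=(b,0)$ and $(0,a)$, so no element of $B^*$ can lie in $S$; the reverse inclusion is the symmetry of $S$, namely $x\in S\iff F-x\notin S$, which turns $x\in G$ into $F-x\in S$ and hence into a representation $ab-x=ia+jb$ with $i,j\ge 1$. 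Since a gap minus $a$ (when still positive) is again a gap and $a,b\in S$ are not gaps, the set $G=B^*$ satisfies exactly the two conditions above, so it is $\beta(\kappa)$ for some $(a,b)$-core $\kappa$. This already produces the partition of the theorem and proves the $\beta$-set description.

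I would then show every $(a,b)$-core lies inside this one. The key lemma is that no positive element of $S$ is a first-column hook of any $(a,b)$-core: if $x\in\beta(\lambda)\cap S$ were minimal, then fixing a representation $x=ia+jb$ and subtracting whichever generator has positive coefficient produces, by closure, a strictly smaller element of $\beta(\lambda)\cap S$, unless $x\in\{a,b\}$, which is itself forbidden. Hence $\beta(\lambda)\subseteq G$ for every core $\lambda$; equivalently, $\beta(\lambda)$ is a down-set of the poset $(G,\le)$ in which $u\le v$ when $v-u\in S$, while $\beta(\kappa)=G$ is the whole poset.

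The final step is the one I expect to be the crux: upgrading $\beta(\lambda)\subseteq\beta(\kappa)$ to $\lambda\subseteq\kappa$, from which $|\lambda|<|\kappa|$ for $\lambda\ne\kappa$ and the uniqueness of the maximizer follow at once. The subtlety is that plain inclusion of first-column hook sets does not imply containment of partitions; one must instead compare the two $\beta$-sets using a common number $n=|G|$ of abacus beads and verify that the sorted bead positions of $\lambda$ are componentwise at most those of $\kappa$, i.e. that for every threshold $t$ the core $\lambda$ has no more beads in $[t,\infty)$ than $\kappa$ does. I would prove this threshold inequality by exploiting that $\beta(\lambda)$ is a down-set of $(G,\le)$ together with the flush structure of $\beta(\kappa)=G$, and this count comparison, where the detailed combinatorics of the gaps of $\langle a,b\rangle$ enters, is the main technical obstacle. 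Once it is established, a proper subpartition has strictly smaller size, so $\kappa$ is the unique $(a,b)$-core of maximum size with $\beta(\kappa)=B^*$; the maximum value $(a^2-1)(b^2-1)/24$ then drops out by substituting $|G|=(a-1)(b-1)/2$ and the classical gap-sum of $\langle a,b\rangle$ into the size formula.
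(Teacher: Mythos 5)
Your setup is sound and matches the machinery this paper uses elsewhere: the identification of $(a,b)$-cores with subsets of $\{1,2,\dots\}$ avoiding the semigroup $S(a,b)$ and closed downward under subtraction of $a$ and $b$ is exactly the order-ideal bijection of Lemma \ref{bijection}, and your identification of the candidate $\beta$-set with the gap set $G$, via the symmetry $x\in S\iff ab-a-b-x\notin S$, is Lemma \ref{twoposet}. Up to the point where you have produced the core $\kappa$ with $\beta(\kappa)=\{ab-ia-jb : i,j\ge 1\}\cap\mathbb{Z}_{>0}$ and shown $\beta(\lambda)\subseteq\beta(\kappa)$ for every $(a,b)$-core $\lambda$, the argument is complete and correct.

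However, the proof has a genuine gap at exactly the step you flag as ``the main technical obstacle'': you never actually establish that $\beta(\lambda)\subsetneq\beta(\kappa)$ forces $|\lambda|<|\kappa|$ (whether via the containment $\lambda\subseteq\kappa$ or otherwise). This is not a routine verification to be deferred. As you correctly observe, set inclusion of $\beta$-sets does not pass to containment of partitions, because the two $\beta$-sets have different cardinalities and renormalizing to a common number of beads shifts every element of the smaller set; the threshold-counting inequality you would need is essentially the full content of Vandehey's theorem, which required abacus arguments in Vandehey's work and affine symmetric group actions in Fayers's, and which this paper treats as a black box rather than reproving. Nor can you sidestep it by comparing sizes directly: in the formula $|\lambda|=\sum_{x\in\beta(\lambda)}x-\binom{|\beta(\lambda)|}{2}$ both terms change when the $\beta$-set shrinks, so monotonicity of size under $\beta$-set inclusion is itself nontrivial. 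For calibration, note that the paper does not prove Theorem \ref{largestcore2} at all; it imports it from Olsson and Stanton and later observes that it is the $p=q=r=1$ case of Theorem \ref{maximalcore} \emph{combined with} Vandehey's theorem. Your writeup reconstructs everything except that imported ingredient, so as it stands it proves only the $\beta$-set description of a candidate core, not the uniqueness or maximality of its size.
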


Figure 1 depicts the Young diagram of $\kappa_{3, 7} = (6, 4, 2, 2, 1, 1)$. In their proof of \hyperref[largestcore2]{Theorem \ref*{largestcore2}}, Stanton and Olsson showed that $\kappa_{n, n + 1}$ contains every other $(n, n + 1)$-core as a subpartition, for each integer $n \ge 2$ [11]. They then asked whether $\kappa_{a, b}$ contains all other $(a, b)$-cores as subpartitions, for every pair of relatively prime positive integers $(a, b)$. Vandehey answered this question in the affirmative in 2009 through the use of abacus diagrams [15]. Recently, Fayers obtained the same result by analyzing actions of the affine symmetric group on the set of $a$-cores (and on the set of $b$-cores) [8]. 

To see an example of Vandehey's theorem, let $A = (3, 5)$. The nonempty partitions in $C(A)$ are $\{ (1), (2), (1, 1), (3, 1), (2, 1, 1), (4, 2, 1, 1) \}$, and every element of $C(A)$ is contained in the $(3, 5)$-core $(4, 2, 1, 1)$. However, this containment phenomenon does not necessarily hold when $|A| \ge 3$ and $\gcd A = 1$. For instance, if $A = \{ 3, 4 , 5 \}$, then the nonempty elements of $C(A)$ are $(1, 1)$ and $(2)$; neither of these is contained in the other. 

\begin{figure}[t]
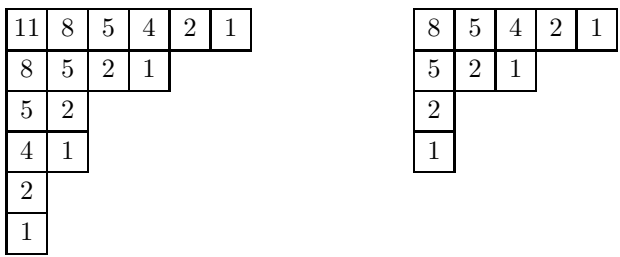

\begin{center}
\ytableausetup{centertableaux}
\begin{ytableau}
11 & 8 & 5 & 4 & 2 & 1 & \none & \none & \none & \none & 8 & 5 & 4 & 2 & 1 \\
8 & 5 & 2 & 1 & \none & \none & \none & \none & \none & \none & 5 & 2 & 1 \\
5 & 2 & \none & \none & \none & \none & \none & \none & \none & \none & 2 \\
4 & 1 & \none & \none & \none & \none & \none & \none & \none & \none & 1 \\
2 \\
1 
\end{ytableau}

\caption{To the left is the Young diagram of $\kappa_{3, 7} = (6, 4, 2, 2, 1, 1)$ and to the right is the Young diagram of $(5, 3, 1, 1)$; each cell contains its hook length. } 
\end{center}
\end{figure}

For any set of positive integers $A$, we say that $C(A)$ has a {\itshape unique maximal element} if there is an $A$-core $\kappa_A$ that contains every other $A$-core as a subpartition; in this case, the set $A$ is said to be {\itshape UM}. In [7], Fayers asked the following question. 

\begin{que}
\label{umquestion}
What triples of positive integers $(a, b, c)$ are UM?
\end{que}

Vandehey's result implies a partial result in this direction. For any set of positive integers $A$, let $S(A)$ be the numerical semigroup generated by $A$; equivalently, $A$ consists of all linear combinations of elements in $A$ with nonnegative integer coefficients. Due to the known fact that an $(a, b)$-core is an $(a + b)$-core (see [2], for instance), Vandehey's result implies that $(a, b, c)$ is UM if $a$ and $b$ are relatively prime and $c \in S(a, b)$. Recently, Yang, Zhong, and Zhou showed that $(2k + 1, 2k + 2, 2k + 3)$ is not UM for any positive integer $k$ [16]. 

In this paper we give a partial answer to \hyperref[umquestion]{Question \ref*{umquestion}}. We call a triple of positive integers $(a, b, c)$ {\itshape aprimitive} if either $a \in S(b, c)$, $b \in S(a, c)$, or $c \in S(a, b)$. The following theorem gives a restriction on triples that can be UM. 

\begin{thm}
\label{umtriples}
Suppose that $(a, b, c)$ is a triple of positive integers such that $\gcd (a, b, c) = 1$; let $p = \gcd (a, b)$, $q = \gcd (a, c)$, $r = \gcd (b, c)$, and $d$, $e$, $f$ be integers such that $(a, b, c) = (dpq, epr, fqr)$ as ordered triples. If $(a, b, c)$ is UM, then $(d, e, f)$ is aprimitive. 
\end{thm}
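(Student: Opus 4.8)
The plan is to prove the contrapositive: assuming $(d,e,f)$ is not aprimitive, I will exhibit two distinct \emph{maximal} elements of $C(a,b,c)$, which suffices, since a UM element would be the \emph{unique} maximal element of the (finite, because $\gcd(a,b,c)=1$) poset $C(a,b,c)$. The engine is conjugation. Transposing a Young diagram transposes every hook and so preserves the multiset of hook lengths; hence $\lambda$ and its conjugate $\lambda'$ lie in $C(A)$ together for any $A$, and conjugation is a containment-preserving involution of $C(A)$. In particular it permutes the maximal elements, so if $(a,b,c)$ were UM, the unique maximal element $\kappa$ would satisfy $\kappa=\kappa'$, i.e. be self-conjugate. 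The strategy is therefore to locate a \emph{canonical} maximal element and show it fails to be self-conjugate precisely when $(d,e,f)$ is not aprimitive.

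The canonical candidate is the natural three-generator analogue of $\kappa_{a,b}$. By Theorem~\ref{largestcore2} we have $\beta(\kappa_{a,b})=\{ab-ia-jb:i,j\ge 1\}\cap\mathbb{Z}_{>0}$, which is exactly the set of positive integers missing from $S(a,b)$. Mimicking this, I would define $\kappa^\ast$ to be the partition with $\beta(\kappa^\ast)=\mathbb{Z}_{>0}\setminus S(a,b,c)$ (the gaps of the semigroup). Since $\gcd(a,b,c)=1$ this set is finite, and it is closed under subtracting each of $a,b,c$ while staying positive (if $h$ is a gap and $h-a>0$ then $h-a$ is a gap, else $h=(h-a)+a\in S(a,b,c)$), which is exactly the beta-set condition for $\kappa^\ast\in C(a,b,c)$. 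The first lemma I would establish is that $\kappa^\ast$ is a maximal element of $C(a,b,c)$, via an abacus/beta-set argument parallel to those of Olsson--Stanton and Vandehey: because $\beta(\kappa^\ast)$ already contains every gap, no core can strictly contain it. Granting this lemma, if $\kappa^\ast$ is not self-conjugate then $\kappa^\ast$ and $(\kappa^\ast)'$ are two distinct maximal cores, so $(a,b,c)$ is not UM. Thus the theorem reduces to the implication: $(d,e,f)$ not aprimitive $\Rightarrow$ $\kappa^\ast$ not self-conjugate.

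I would then convert self-conjugacy into a semigroup property through the standard Frobenius-coordinate / beta-set reflection dictionary, which identifies: $\kappa^\ast$ is self-conjugate if and only if $S:=S(a,b,c)$ is a \emph{symmetric} numerical semigroup, meaning that for every integer $z$ exactly one of $z,\ F-z$ lies in $S$, where $F$ is the Frobenius number. (One can sanity-check this on the worked cases: $S(3,4,7)$ and $S(4,6,9)$ are symmetric with self-conjugate gaps-cores $(3,1,1)$ and its analogue, while $S(3,5,7)$ and $S(3,4,5)$ are non-symmetric, their gaps-cores $(2,1,1)$ and $(1,1)$ having distinct conjugates $(3,1)$ and $(2)$.) The remaining task is the purely arithmetic implication: if $(d,e,f)$ is not aprimitive then $S(a,b,c)$ is not symmetric.

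This last step is where I expect the main difficulty, and it is exactly where the decomposition $(a,b,c)=(dpq,epr,fqr)$ does its work. Here $p,q,r$ are pairwise coprime and each is coprime to the ``third'' generator, so the three generators share factors only in the prescribed pairwise pattern. I would invoke the structure theory of three-generated numerical semigroups (Herzog, Delorme): such an $S$ is symmetric if and only if it is a complete intersection, equivalently a \emph{gluing}, and a gluing of the generators forces precisely the pattern of shared factors $p,q,r$ under which the reduced triple $(d,e,f)$ collapses to an aprimitive one. Taking the contrapositive yields that non-aprimitivity of $(d,e,f)$ obstructs any gluing, hence symmetry. Concretely, I would aim to manufacture an explicit witness to non-symmetry --- an integer $z$ with $z\notin S$ and $F-z\notin S$ --- assembled from the three simultaneous non-representability relations $d\notin S(e,f)$, $e\notin S(d,f)$, $f\notin S(d,e)$ together with the factors $p,q,r$. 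The delicate points are translating all three non-membership conditions at once into a single symmetry failure for the compound semigroup, coordinating the three distinct pairwise common factors $p,q,r$ simultaneously rather than through one gluing, and verifying that the degenerate regime (where some generator is redundant, so $S$ is at most two-generated and automatically symmetric) matches exactly the aprimitive case for $(d,e,f)$.
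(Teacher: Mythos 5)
Your outer framing is sound and genuinely different from the paper's. Where the paper works with the poset $P(A)$ of gaps of $S(a,b,c)$ and shows that UM forces $P(A)$ to have a unique maximal element (Proposition \ref{posetum}), you observe that conjugation is a hook-length-preserving, containment-preserving involution of $C(A)$, so a unique maximal core must be self-conjugate, and then translate self-conjugacy of the gaps-core into symmetry of the semigroup. These are in fact the same condition in different clothing: a maximal element of $P(A)$ is precisely a pseudo-Frobenius number of $S(a,b,c)$, so ``poset-UM'' is exactly ``type one,'' i.e.\ symmetric. Two small repairs are needed in your reduction. First, your justification that $\kappa^\ast$ is maximal (``$\beta(\kappa^\ast)$ already contains every gap, so no core can strictly contain it'') does not follow directly, because $\mu\subseteq\lambda$ does not imply $\beta(\mu)\subseteq\beta(\lambda)$ --- beta-sets shift with the number of parts; the correct argument, as in Corollary \ref{uniquemaximalelement}, is that the order-ideal bijection is length-preserving, so $\kappa^\ast$ is the unique longest core and hence cannot be a subpartition of any other core. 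Second, one must also handle the case where $S$ \emph{is} symmetric but UM still fails (e.g.\ $(4,5,6)$); this does not affect your contrapositive, but it is why the theorem is only one implication.

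The genuine gap is the final arithmetic step, which you yourself flag as the main difficulty: you never actually prove that non-aprimitivity of $(d,e,f)$ forces $S(a,b,c)$ to be non-symmetric. You propose to import Herzog's theorem (a $3$-generated numerical semigroup is symmetric iff it is a complete intersection) together with Delorme's gluing characterization, and then assert that a gluing ``forces precisely the pattern of shared factors $p,q,r$'' making $(d,e,f)$ aprimitive. That assertion is the crux and is left unproved: the gluing condition for the split $\{a,b\}\mid\{c\}$ reads $fqr\in S(dq,er)$ with $\gcd(p,fqr)=1$, and one must show this is equivalent to $f\in S(d,e)$ (and likewise for the other two splits, and for the degenerate case where a generator is redundant). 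This is a doable gcd computation --- reducing the coefficients modulo $q$ and $r$ using $\gcd(q,er)=\gcd(r,dq)=1$ --- but it is not in your write-up, and it is essentially the content of the paper's Propositions \ref{maximalelements} and \ref{umrelativelyprime}, which occupy most of Section 2 and which the paper proves from scratch by explicitly exhibiting two distinct maximal gaps $p_{k'-1}$ and $p_{k'}$. So: with Herzog--Delorme admitted as black boxes and the gluing-to-aprimitive translation carried out, your route yields a shorter, more conceptual proof; without them, the central argument of the theorem is missing.
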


Not all triples of the form given by the above theorem are UM; for instance, we will see in Section 2 that $(4, 5, 6)$ is not UM. However, we may use \hyperref[umtriples]{Theorem \ref*{umtriples}} to answer \hyperref[umquestion]{Question \ref*{umquestion}} completely when $a$, $b$, and $c$ are pairwise relatively prime. 

\begin{cor}
\label{relativelyprime}
If $a < b < c$ are pairwise relatively prime positive integers, then $(a, b, c)$ is UM if and only if $c \in S(a, b)$. 
\end{cor}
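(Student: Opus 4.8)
The plan is to combine Theorem \ref{umtriples} with Vandehey's theorem, deriving the two directions of the equivalence from these two inputs respectively. Since $a$, $b$, $c$ are pairwise relatively prime, we have $p = q = r = 1$ in the notation of Theorem \ref{umtriples}, so $(d, e, f) = (a, b, c)$; thus Theorem \ref{umtriples} specializes to the statement that if $(a, b, c)$ is UM, then $(a, b, c)$ is itself aprimitive, i.e.\ $a \in S(b, c)$, $b \in S(a, c)$, or $c \in S(a, b)$. I would prove the two implications separately.

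For the \emph{if} direction, suppose $c \in S(a, b)$. First I would record the standard fact, noted in the introduction and following by iteration from [2], that a partition which is both an $s$-core and a $t$-core is also an $(s + t)$-core; applying this repeatedly, starting from a partition that is simultaneously an $a$-core and a $b$-core, shows that every $(a, b)$-core is automatically an $s$-core for each $s \in S(a, b)$. In particular, writing $c = ia + jb$ with $i, j \ge 0$, every $(a, b)$-core is a $c$-core, so $C(a, b) = C(a, b, c)$. Since $a$ and $b$ are relatively prime, Vandehey's theorem provides an $(a, b)$-core $\kappa_{a, b}$ containing every other $(a, b)$-core as a subpartition; this same partition is then the unique maximal element of $C(a, b, c)$, so $(a, b, c)$ is UM.

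For the \emph{only if} direction, suppose $(a, b, c)$ is UM, so that by the specialization of Theorem \ref{umtriples} above the triple is aprimitive; the task is to rule out the first two alternatives. Since $a < b < c$, the smallest positive element of $S(b, c)$ is $b > a > 0$, so $a \notin S(b, c)$. If $b \in S(a, c)$, then writing $b = ia + jc$ with $i, j \ge 0$ and using $b < c$ forces $j = 0$, hence $a \mid b$; as $\gcd(a, b) = 1$ this gives $a = 1$, in which case $c \in S(1, b) = \mathbb{Z}_{\ge 0}$ holds trivially. In every remaining case the only surviving alternative is $c \in S(a, b)$, which completes the argument.

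The substantive content is carried entirely by the two inputs, Theorem \ref{umtriples} and Vandehey's theorem, so the corollary itself is short. The only delicate point is the elimination argument in the \emph{only if} direction, where the ordering $a < b < c$ and the coprimality of $a$ and $b$ must be used precisely; in particular, the degenerate case $a = 1$ must be checked to fall under the conclusion $c \in S(a, b)$ rather than constituting a genuine exception. I expect this case analysis, rather than any deep structural argument, to be the main thing to get right.
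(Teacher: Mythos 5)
Your proof is correct and follows exactly the paper's approach: Vandehey's theorem (via the fact that an $(a,b)$-core is an $s$-core for every $s \in S(a,b)$) for the \emph{if} direction, and Theorem \ref{umtriples} with $p = q = r = 1$ for the \emph{only if} direction. The paper's own proof is a two-line version of this; your elimination of the alternatives $a \in S(b,c)$ and $b \in S(a,c)$ is a correct spelling-out of a step the paper leaves implicit.
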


\begin{proof}
As noted previously, Vandehey's theorem implies that $(a, b, c)$ is UM if $c \in S(a, b)$. Setting $p = q = r = 1$ in \hyperref[umtriples]{Theorem \ref*{umtriples}} yields the converse. 
\end{proof}

\hyperref[relativelyprime]{Corollary \ref*{relativelyprime}} can be viewed as a converse to Vandehey's theorem; it also generalizes the previously mentioned result of Yang, Zhong, and Zhou. 

Using \hyperref[umtriples]{Theorem \ref*{umtriples}}, we will also be able to express the unique maximal $(a, b, c)$-core $\kappa_{a, b, c}$ in terms of $a$, $b$, and $c$ if the triple $(a, b, c)$ is UM. 

\begin{thm}
\label{maximalcore}
Suppose that $A = (a, b, c)$ is a triple of positive integers that is UM; let $p = \gcd (a, b)$, $q = \gcd (a, c)$, $r = \gcd (b, c)$, and $d, e, f$ be integers such that $(a, b, c) = (dpq, epr, fqr)$ as ordered triples. If $f \in S(d, e)$, then a positive integer is in $\beta (\kappa_A)$ if and only if it is of the form $(de + f) pqr - ia - jb - kc$ for some positive integers $i$, $j$, and $k$. 
\end{thm}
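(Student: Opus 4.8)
The plan is to show $\beta(\kappa_A) = B$, where $B := \{(de+f)pqr - ia - jb - kc : i,j,k \ge 1\} \cap \mathbb{Z}_{>0}$ and $\kappa_A$ is the maximal $A$-core guaranteed by the UM hypothesis. First I would record a cleaner, reflection-type description of $B$. Substituting $i = 1+i'$, $j = 1+j'$, $k = 1+k'$ and setting $F^{*} := (de+f)pqr - a - b - c$, and using that $\{i'a + j'b + k'c : i',j',k' \ge 0\}$ is exactly the numerical semigroup $S(A)$, one obtains $B = (F^{*} - S(A)) \cap \mathbb{Z}_{>0} = \{F^{*} - s : s \in S(A),\ 0 \le s < F^{*}\}$. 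I will use throughout the standard dictionary: a finite set $T \subset \mathbb{Z}_{>0}$ equals $\beta(\lambda)$ for an $n$-core $\lambda$ if and only if $n \notin T$ and every $x \in T$ with $x > n$ satisfies $x - n \in T$.

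With this reformulation the closure half of the core condition is immediate: if $x = F^{*} - s \in B$ and $x > a$, then $x - a = F^{*} - (s+a)$ with $s + a \in S(A)$ and $0 \le s+a < F^{*}$, so $x - a \in B$, and likewise for $b$ and $c$. Thus $B$ is closed under subtracting each of $a,b,c$ while staying positive. It then remains to check that $a,b,c \notin B$, i.e. that $F^{*} - a$, $F^{*} - b$, $F^{*} - c \notin S(A)$. This is the first point where the hypothesis $f \in S(d,e)$ enters: writing $f = ud + ve$ gives $pc = (ur)a + (vq)b$, so $f \in S(d,e)$ is equivalent to $pc \in S(a,b)$, and since $(de+f)pqr = \mathrm{lcm}(a,b) + pc$ this identifies $F^{*} = \mathrm{lcm}(a,b) + pc - a - b - c$. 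Together with the pairwise coprimalities forced by $(a,b,c) = (dpq,epr,fqr)$ and $\gcd(a,b,c)=1$, this should let one rule out the three memberships above by a descent in $S(A)$; alternatively these exclusions follow a posteriori once $B$ is shown to be the beta-set of the (maximal, hence genuine) core.

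The main step, which I expect to be the principal obstacle, is to prove that the partition $\mu$ with $\beta(\mu) = B$ contains every other $A$-core; by the UM hypothesis this forces $\mu = \kappa_A$ and completes the proof. Here I would pass to the $p$-quotient, where $p = \gcd(a,b)$: since $a = p\cdot dq$ and $b = p\cdot er$ with $\gcd(dq,er) = 1$, being an $a$-core and a $b$-core translates into the requirement that each of the $p$ quotient components be a $(dq,er)$-core, hence a subpartition of the Olsson--Stanton core $\kappa_{dq,er}$ from Theorem~\ref{largestcore2}. The hard part will be the $c$-core condition: because $\gcd(c,p) = 1$ it does not decouple across the $p$-quotient, and coupling it to the choice of $p$-core is exactly where $pc \in S(a,b)$ does its work, ensuring that the configuration with all components maximal actually assembles into an $A$-core dominating every other one.

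To organize the maximality argument I would lean on the structural description of $\kappa_A$ produced in the proof of Theorem~\ref{umtriples}, working componentwise in the $p$-quotient to show that every $A$-core's components are subpartitions of those of $\mu$ and that the $p$-cores agree, thereby yielding genuine partition-containment $\nu \subseteq \mu$. One must be careful at this stage, since containment of beta-sets alone does not imply containment of partitions, so the quotient bookkeeping (adding $p$-ribbons versus adding boxes to components) is where the real care is needed. Having established $\mu = \kappa_A$, a final reconstruction of $\beta(\mu)$ from the quotient data is matched against $B$, and the reflection description $B = (F^{*} - S(A)) \cap \mathbb{Z}_{>0}$ is then exactly the asserted characterization: a positive integer lies in $\beta(\kappa_A)$ if and only if it has the form $(de+f)pqr - ia - jb - kc$ with $i,j,k \ge 1$.
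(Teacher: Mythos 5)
Your reformulation of the target set as $B = (F^{*} - S(A)) \cap \mathbb{Z}_{>0}$ with $F^{*} = (de+f)pqr - a - b - c$, and the observation that such a set is automatically closed downward under subtraction of $a$, $b$, $c$, are both correct and consistent with what the paper needs (they amount to saying $B$ is an order ideal of $P(A) = \mathbb{Z}_{>0} \setminus S(A)$ once $F^{*} \notin S(A)$ is known). The identity $(de+f)pqr = \operatorname{lcm}(a,b) + pc$ and the equivalence $f \in S(d,e) \Leftrightarrow pc \in S(a,b)$ also check out. The problem is that everything after this point --- which you yourself flag as ``the principal obstacle'' --- is a plan rather than a proof, and it is a plan aimed at the wrong target. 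You propose to show that the partition $\mu$ with $\beta(\mu) = B$ contains \emph{every} other $A$-core via a $p$-quotient decomposition. That is essentially re-proving the UM property from scratch instead of exploiting it, and the step you defer (``the $c$-core condition \ldots does not decouple across the $p$-quotient, and coupling it to the choice of $p$-core is exactly where $pc \in S(a,b)$ does its work'') is precisely the hard content, for which no mechanism is given. Nothing in the proposal explains how the $c$-hooks, which are not multiples of $p$, interact with the quotient components, so the argument cannot be completed as described.

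The leverage you are not using is the chain already built in Section 2: since $A$ is UM, Corollary \ref{uniquemaximalelement} gives $\beta(\kappa_A) = P(A)$ outright (no containment argument is needed --- $\kappa_A$ must be the $A$-core of maximal length, whose beta-set is all of $P(A)$), and Proposition \ref{posetum} gives that $P(A)$ is poset-UM. The entire theorem therefore reduces to a single computation: identify the unique maximal element of $P(A)$ as $F^{*} = (de+f)pqr - a - b - c$, after which $P(A) = (F^{*} - S(A)) \cap \mathbb{Z}_{>0}$ follows because every element of a finite poset with a unique maximal element lies below it. The paper does this identification by writing $f = md + ne$, noting $P(d,e,f) = P(d,e)$ has unique maximal element $de - d - e = (m-1)d + (d+n-1)e - f$ by Lemma \ref{twoposet}, and transporting it to $P(a,b,c)$ via Proposition \ref{maximalelements}. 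Your proposal never engages with the claim that $F^{*}$ is maximal in $P(A)$ (even your verification that $F^{*} - a, F^{*} - b, F^{*} - c \notin S(A)$ is deferred to ``a descent in $S(A)$'' or to an a posteriori argument), and that claim is the actual mathematical content of the theorem. As it stands the proposal has a genuine gap at its central step.
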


Observe that letting $p = q = r = 1$ in \hyperref[maximalcore]{Theorem \ref*{maximalcore}} yields \hyperref[largestcore2]{Theorem \ref*{largestcore2}} of Olsson and Stanton, due to Vandehey's theorem. 

The proofs of \hyperref[umtriples]{Theorem \ref*{umtriples}} and \hyperref[maximalcore]{Theorem \ref*{maximalcore}} use a recently developed characterization of simultaneous cores using numerical semigroups, which we will explain further in Section 2. 

\section{Proofs of Theorems 1.3 and 1.5}
In this section, we first explain a bijection (originally due to Stanley and Zanello in [14] when $|A| = 2$ and later generalized to arbitrary sets $A$ by Amdeberhan and Leven in [2]) between $A$-cores and order ideals of some poset $P(A)$. We will then use this bijection to obtain a preliminary necessary condition for a set $A$ to be UM. Using this condition, we will establish \hyperref[umtriples]{Theorem \ref*{umtriples}} and \hyperref[maximalcore]{Theorem \ref*{maximalcore}}. 

Now let us define the poset $P(A)$. The elements of $P(A)$ are those of $\mathbb{Z}_{\ge 0} \backslash S(A)$, the set of positive integers not contained in the numerical semigroup generated by $A$. Notice that if $\gcd A = 1$, then $|P(A)| < \infty$; we will suppose that this is the case for the remainder of the section. The order on $P(A)$ is fixed by requiring $p \in P(A)$ to be greater than $q \in P(A)$ if $p - q \in S(A)$. Under this partial order, $P(A)$ is a poset; we will follow the poset terminology given in Chapter 3 of Stanley's text [12, 13]. Figure 2 depicts the Hasse diagrams of the posets $P(3, 7)$ and $P(3, 7, 11)$. 

The following lemma is due to Amdeberhan and Leven in [1]. 

\begin{lem}
\label{bijection}
There is a bijection between $C(A)$ and the set of order ideals of $P(A)$. Specifically, for each partition $\lambda$, the set $\beta (\lambda)$ is an order ideal of $P(A)$ if and only if $\lambda$ is an $A$-core. 
\end{lem}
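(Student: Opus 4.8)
The plan is to establish the bijection by showing that the $\beta$-set of a partition $\lambda$ is an order ideal of $P(A)$ exactly when $\lambda$ is an $A$-core, since this gives the correspondence directly: distinct partitions have distinct $\beta$-sets, so the map $\lambda \mapsto \beta(\lambda)$ is injective, and I would verify surjectivity by showing every order ideal arises as some $\beta(\lambda)$. First I would recall the standard characterization of $\beta$-sets: a finite set $B$ of nonnegative integers equals $\beta(\lambda)$ for some partition $\lambda$ precisely when $B$ is ``$S$-closed downward'' in the sense relevant to first-column hook lengths—concretely, using the normalized bead/abacus description, $B$ should contain $0$ down to some point and be closed in the appropriate way. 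The cleaner route is to translate everything into the language of the numerical semigroup $S(A)$ and the gap set $P(A) = \mathbb{Z}_{\ge 0} \setminus S(A)$.

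The heart of the argument is the hook-length criterion. The key fact I would invoke is that the hook lengths appearing in the Young diagram of $\lambda$ are precisely the positive integers of the form $h - h'$ where $h \in \beta(\lambda)$ and either $h' \in \beta(\lambda)$ with $h' < h$, or $h'$ is a ``missing'' nonnegative integer below $h$; more usefully, the multiset of hook lengths of $\lambda$ is determined by $\beta(\lambda)$ via the rule that $m$ is a hook length of $\lambda$ if and only if there exists $h \in \beta(\lambda)$ with $h - m \ge 0$ and $h - m \notin \beta(\lambda)$. With this in hand, $\lambda$ being an $A$-core means no cell has hook length in $A$, which by the semigroup structure is equivalent to no hook length lying in $S(A)$ after accounting for the generators—so I would show that avoiding all hook lengths in $A$ is equivalent to avoiding all hook lengths in $S(A)$, using that $S(A)$ is generated by $A$ and that if a partition has a hook of length $s = s_1 + s_2$ with $s_1, s_2 \in S(A)$, then it also has hooks of lengths $s_1$ and $s_2$. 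This reduces the $A$-core condition to: whenever $h \in \beta(\lambda)$ and $s \in S(A)$ with $h - s \ge 0$, we must have $h - s \in \beta(\lambda)$.

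Now I would reinterpret this closure condition on $\beta(\lambda)$ in terms of $P(A)$. Normalize by passing to the set of gaps: the condition ``$h \in \beta(\lambda)$, $s \in S(A)$, $h - s \ge 0 \implies h - s \in \beta(\lambda)$'' says exactly that the complement of $\beta(\lambda)$ inside a suitable initial segment is upward-closed under adding elements of $S(A)$, which dualizes to the statement that the set of gaps lying in $\beta(\lambda)$ forms a \emph{down-set} for the order ``$p > q$ iff $p - q \in S(A)$'' on $P(A)$. Concretely, after removing the trivial elements of $\beta(\lambda)$ that come from $S(A)$ itself, the interesting part of $\beta(\lambda)$ intersected with $P(A)$ is an order ideal precisely when the $A$-core closure condition holds. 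I would make this precise by exhibiting the explicit correspondence between the finitely many nontrivial beads and the gaps, checking that the partial order defined by $S(A)$-divisibility matches the subpartition/containment structure.

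The main obstacle I expect is the bookkeeping in the reduction from ``no hook length in $A$'' to ``no hook length in $S(A)$,'' and correspondingly identifying the right initial segment so that $\beta(\lambda) \cap P(A)$ is genuinely an order ideal rather than some shifted or truncated variant. This requires the semigroup-theoretic lemma that hook lengths of a partition are closed under the additive combinatorics of $S(A)$—specifically that if $\lambda$ has a hook of length $s_1 + s_2$ then it has hooks of lengths $s_1$ and $s_2$ separately—which is the combinatorial crux and must be proved carefully from the Young-diagram structure (it follows from the fact that along any hook one can decompose the arm and leg). Once that lemma is in place, the rest is a clean translation between the $\beta$-set picture and the poset $P(A)$, and injectivity plus the explicit inverse construction (build $\lambda$ from any order ideal by taking its elements together with all of $S(A)$ below a large bound as the $\beta$-set) yields the bijection and simultaneously shows it is inclusion-preserving.
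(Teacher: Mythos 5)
The paper does not actually prove this lemma --- it quotes it from Amdeberhan and Leven --- so your proposal stands on its own. Your overall strategy is the standard and correct one: use the criterion that $m$ is a hook length of $\lambda$ if and only if some $h \in \beta(\lambda)$ satisfies $h - m \ge 0$ and $h - m \notin \beta(\lambda)$, reduce ``no hook length in $A$'' to ``no hook length in $S(A)\setminus\{0\}$,'' and read off the order-ideal condition. However, two of your steps are wrong as written. The auxiliary lemma you invoke is false: a partition with a hook of length $s_1 + s_2$ need \emph{not} have hooks of both lengths $s_1$ and $s_2$. For instance, the partition $(6,4,2)$ has $\beta$-set $\{8,5,2\}$, hence a hook of length $8 = 3+5$ and a hook of length $5$ (as $8-5=3 \notin \{8,5,2\}$), but no hook of length $3$ (as $8-3=5$ and $5-3=2$ both lie in the $\beta$-set and $2<3$). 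The true statement, and the one you actually need, is disjunctive: if $h \in \beta(\lambda)$ with $h-(s_1+s_2)\ge 0$ and $h-(s_1+s_2)\notin\beta(\lambda)$, then either $h-s_1\notin\beta(\lambda)$ (giving a hook of length $s_1$) or $h-s_1\in\beta(\lambda)$ and $(h-s_1)-s_2\notin\beta(\lambda)$ (giving a hook of length $s_2$); iterating over a decomposition of $s\in S(A)$ into generators shows that any hook length in $S(A)\setminus\{0\}$ forces a hook length in $A$.

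The second problem is the endgame, where you have the wrong normalization of $\beta$-sets in mind. Under the paper's convention, $\beta(\lambda)$ is precisely the finite set of first-column hook lengths, a set of distinct positive integers, and every such set is $\beta(\lambda)$ for a unique partition. Since first-column hook lengths are themselves hook lengths, an $A$-core automatically satisfies $\beta(\lambda)\cap S(A)=\emptyset$, i.e.\ $\beta(\lambda)\subseteq P(A)$: there are no ``trivial elements coming from $S(A)$'' to remove, and the object that must be an order ideal is $\beta(\lambda)$ itself, not $\beta(\lambda)\cap P(A)$. Correspondingly, the inverse map sends an order ideal $I$ to the unique partition whose $\beta$-set equals $I$, full stop. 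Your proposed inverse --- padding $I$ with all elements of $S(A)$ below a large bound --- produces a set containing $0\in S(A)$, which cannot be a $\beta$-set under this convention, and in any case yields a partition with first-column hooks in $S(A)$, hence not an $A$-core. Both defects are repairable, but as written the decomposition lemma is false and the claimed bijection is not the map the statement asserts.
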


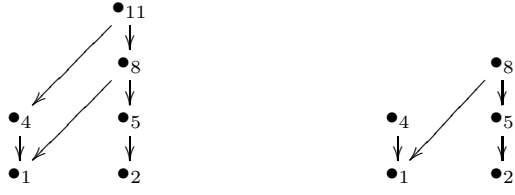
\begin{figure}[t]
\begin{center}
\xymatrix @R=.75pc {& & & & \bullet_{11} \ar[d] \ar[ldd] \\
& & & & \bullet_8 \ar[d] \ar[ldd] & & & & \bullet_8 \ar[d] \ar[ldd] \\
& & & \bullet_4 \ar[d] & \bullet_5 \ar[d] & & & \bullet_4 \ar[d] & \bullet_5 \ar[d] \\
& & & \bullet_1 & \bullet_2 & & & \bullet_1 & \bullet_2} 
\caption{The Hasse diagrams of $P(3, 7)$ and $P(3, 7, 11)$ are shown on the left and right, respectively.} 
\end{center} 
\end{figure} 

For instance, suppose that $A \subseteq \{ 3, 7, 11 \}$; then, $(5, 3, 1, 1)$ is an $A$-core and its beta set $\{ 8, 5, 2, 1 \}$ is an order ideal of $P(A)$. Furthermore, $\kappa_{3, 7} = (6, 4, 2, 2, 1, 1)$ is a $(3, 7)$-core and its beta set $\{ 1, 2, 4, 5, 8, 11 \}$ is an order ideal of $P(3, 7)$; however, $\kappa_{3, 7}$ is not a $(3, 7, 11)$-core and its beta set is not an order ideal of $P(3, 7, 11)$. 

From \hyperref[bijection]{Lemma \ref*{bijection}}, there is an $A$-core $\kappa_A'$ such that $\beta (\kappa_A') = P(A)$. The following result states that $\kappa_A'$ is the unique maximal element of $C(A)$ if $A$ is UM. 

\begin{cor}
\label{uniquemaximalelement}
If a set of positive integers $A$ is UM, then $\kappa_A' = \kappa_A$. 
\end{cor}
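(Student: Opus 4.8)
The plan is to prove the two containments $\kappa_A' \subseteq \kappa_A$ and $\kappa_A \subseteq \kappa_A'$ separately and then conclude equality. The first is essentially immediate: since $\beta(\kappa_A') = P(A)$ is an order ideal of $P(A)$ (the whole poset is trivially down-closed), \hyperref[bijection]{Lemma \ref*{bijection}} guarantees that $\kappa_A'$ is an $A$-core, and so the hypothesis that $A$ is UM forces $\kappa_A' \subseteq \kappa_A$ by the very definition of $\kappa_A$. The real content of the corollary is therefore the reverse containment.

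Here I would resist the temptation to argue directly that $\kappa_A$ is a subpartition of $\kappa_A'$, because this is exactly where the naive approach collapses. One might hope that, since $\beta(\kappa_A')$ is the largest order ideal of $P(A)$, every $A$-core embeds into $\kappa_A'$, which would make $\kappa_A'$ automatically the unique maximal element and would render \emph{every} $A$ with $\gcd A = 1$ UM. But containment of beta sets does \emph{not} imply containment of partitions: for $A = \{3,4,5\}$ one has $\beta\big((2)\big) = \{2\} \subseteq \{1,2\} = \beta\big((1,1)\big)$, yet $(2) \not\subseteq (1,1)$. So the argument must genuinely use both the subpartition relation and the beta-set relation, which point in opposite directions, and reconcile them; this reconciliation is the main obstacle.

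The device I have in mind for reconciling them is a count linking the number of parts of a partition to the size of its beta set. Writing $\ell(\lambda)$ for the length of $\lambda$, note that $\beta(\lambda) = (\lambda_1 + r - 1, \lambda_2 + r - 2, \ldots, \lambda_r)$ is a strictly decreasing, hence distinct, list of $r$ positive integers, so $|\beta(\lambda)| = \ell(\lambda)$. Now $\kappa_A$ is an $A$-core, so by \hyperref[bijection]{Lemma \ref*{bijection}} its beta set is an order ideal of $P(A)$, whence $\beta(\kappa_A) \subseteq P(A) = \beta(\kappa_A')$; taking cardinalities gives $\ell(\kappa_A) \le \ell(\kappa_A')$. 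In the other direction, the containment $\kappa_A' \subseteq \kappa_A$ from the first paragraph forces $\ell(\kappa_A') \le \ell(\kappa_A)$.

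Combining the two inequalities yields $\ell(\kappa_A) = \ell(\kappa_A')$, so $\beta(\kappa_A)$ and $\beta(\kappa_A')$ are finite sets of equal size with $\beta(\kappa_A) \subseteq \beta(\kappa_A')$; they must therefore coincide. Since the map $\lambda \mapsto \beta(\lambda)$ is injective (it is a bijection by \hyperref[bijection]{Lemma \ref*{bijection}}), this gives $\kappa_A = \kappa_A'$, as desired. The only delicate point is the recognition that beta-containment and subpartition-containment have to be invoked together, one for each inequality; once that is set up, the length bookkeeping is routine.
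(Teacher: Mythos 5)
Your proof is correct and rests on the same key facts as the paper's: the length-preserving property of the beta-set bijection of \hyperref[bijection]{Lemma \ref*{bijection}} together with $P(A)$ being the largest order ideal, combined with the UM hypothesis to get the containment $\kappa_A' \subseteq \kappa_A$. The paper phrases this as ``$\kappa_A'$ is the longest $A$-core, hence not properly contained in any other $A$-core,'' while you run the two length inequalities explicitly, but this is essentially the same argument.
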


\begin{proof}
The bijection in \hyperref[bijection]{Lemma \ref*{bijection}} is length preserving; since the longest order ideal of $P(A)$ is $P(A)$, the longest $A$-core is $\kappa_A'$. Therefore, $\kappa_A'$ is not contained in any other $A$-core, which implies that $\kappa_A$ is the unique maximal element of $C(A)$ because $A$ is UM. Thus, $\kappa_A' = \kappa_A$. 
\end{proof} 

Now, we call a poset $P$ {\itshape poset-UM} if $P$ contains a unique maximal element. For instance, Figure 2 shows that $P(3, 7)$ is poset-UM with unique maximal element $11$ and that $P(3, 7, 11)$ is not poset-UM since it has both $4$ and $8$ as maximal elements. The following known lemma gives examples of posets that are poset-UM. 

\begin{lem}
\label{twoposet}
Suppose that $a$ and $b$ are relatively prime positive integers; then, $P(a, b)$ is poset-UM with maximal element $ab - a - b$. Equivalently, a positive integer is in $P(a, b)$ if and only if it is of the form $ab - ja - hb$ for some integers $j \in [1, b - 1]$ and $h \in [1, a - 1]$. 
\end{lem}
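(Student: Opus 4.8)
The plan is to derive both assertions from a single classical description of the gaps of the numerical semigroup $S(a, b)$, namely the residue characterization of representability modulo $b$.

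First I would establish the following membership criterion. Since $\gcd(a, b) = 1$, the integer $a$ is invertible modulo $b$, so every integer $n$ admits a unique representation $n = xa + yb$ with $x \in \{0, 1, \ldots, b - 1\}$ and $y \in \mathbb{Z}$; I will call this the canonical representation of $n$. I claim that $n \in S(a, b)$ if and only if its canonical representation has $y \ge 0$. Any representation $n = x'a + y'b$ satisfies $x' \equiv x \pmod b$, so $x' = x + tb$ and $y' = y - ta$ for some $t \in \mathbb{Z}$; requiring $x' \ge 0$ forces $t \ge 0$ (as $0 \le x \le b - 1$), whence $y' = y - ta \le y$. In particular, if $n \in S(a, b)$ then some representation has $x', y' \ge 0$, which forces $t \ge 0$ and hence $y = y' + ta \ge 0$; the converse is clear since the canonical representation then already has nonnegative coefficients. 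This criterion is the crux of the argument.

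Next I would read off $P(a, b)$. A positive integer $n$ lies in $P(a, b) = \mathbb{Z}_{\ge 0} \setminus S(a, b)$ exactly when its canonical representation $n = xa + yb$ has $y \le -1$. Writing $j = b - x$ and $h = -y$ gives $n = ab - ja - hb$. The constraint $x \in \{0, \ldots, b - 1\}$ together with $n > 0$ forces $x \ge 1$ (since $x = 0$ with $y \le -1$ would give $n < 0$), i.e. $j \in [1, b - 1]$; and $n > 0$ with $x \le b - 1$ yields $hb = |y|b < xa \le (b - 1)a < ab$, hence $h \le a - 1$, i.e. $h \in [1, a - 1]$. Conversely, any positive integer of the form $ab - ja - hb$ with $j \in [1, b - 1]$ and $h \in [1, a - 1]$ has canonical representation $x = b - j \in \{0, \ldots, b - 1\}$, $y = -h < 0$, so by the criterion it is not in $S(a, b)$ and therefore lies in $P(a, b)$. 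This proves the second (equivalent) statement of the lemma in both directions.

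Finally I would prove that $P(a, b)$ is poset-UM. Taking $j = h = 1$ shows $ab - a - b \in P(a, b)$ (it is positive once $a, b \ge 2$; if $a = 1$ or $b = 1$ then both $P(a, b)$ and the index set are empty and the statement is vacuous). For an arbitrary $g = ab - ja - hb \in P(a, b)$, one computes $(ab - a - b) - g = (j - 1)a + (h - 1)b$, which is a nonnegative-integer combination of $a$ and $b$ and so lies in $S(a, b)$. By the definition of the order on $P(a, b)$, this means $ab - a - b \ge g$ for every $g \in P(a, b)$; thus $ab - a - b$ is the greatest element of $P(a, b)$, and in particular its unique maximal element. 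The only nontrivial step is the membership criterion in the second paragraph; everything afterward is direct substitution. I expect the main care to be bookkeeping the two positivity constraints that cut the range of $(j, h)$ down to $[1, b - 1] \times [1, a - 1]$, and verifying that the difference $(ab - a - b) - g$ always lands in $S(a, b)$, which is exactly what upgrades ``largest gap'' to ``top element of the poset.''
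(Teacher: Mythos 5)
Your proof is correct and complete. Note that the paper itself offers no proof to compare against: it states this as a ``known lemma'' (it is the classical description of the gaps of the numerical semigroup $\langle a,b\rangle$, going back to Sylvester), so there is no question of matching the paper's approach. Your argument is the standard one via the canonical representation $n = xa + yb$ with $x \in \{0,\dots,b-1\}$, i.e.\ essentially the Ap\'ery set of $S(a,b)$ with respect to $b$, and every step checks out: the membership criterion, the change of variables $j = b - x$, $h = -y$ with the two positivity bounds pinning $(j,h)$ into $[1,b-1]\times[1,a-1]$, and the final observation that $(ab-a-b) - g = (j-1)a + (h-1)b \in S(a,b)$, which shows $ab-a-b$ is in fact the \emph{greatest} element of $P(a,b)$ (stronger than merely the unique maximal one, and exactly what later arguments in the paper need). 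Your handling of the degenerate case $a=1$ or $b=1$ is also the right call, since then $P(a,b)=\emptyset$ and the biconditional is vacuous.
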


\noindent The following proposition yields a preliminary necessary condition for a set of positive integers to be UM. 

\begin{prop}
\label{posetum}
If a set of positive integers $A$ is UM, then $P(A)$ is poset-UM. 
\end{prop}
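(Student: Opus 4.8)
The plan is to prove poset-UM directly, by identifying the maximum core's $\beta$-set with all of $P(A)$ and then showing that the largest integer in $P(A)$ is its only maximal element. First I would invoke \hyperref[uniquemaximalelement]{Corollary \ref*{uniquemaximalelement}}: assuming $A$ is UM, the maximum core satisfies $\kappa_A = \kappa_A'$, so $\beta (\kappa_A) = P(A)$ and $\ell (\kappa_A) = L := |P(A)|$. I may assume $P(A) \neq \emptyset$, since otherwise the empty core is the only $A$-core and there is nothing to check. The key observation is that the largest integer $M \in P(A)$ is automatically a maximal element of the poset $P(A)$: if some $y \in P(A)$ were greater than $M$ in the poset order, then $y - M \in S(A)$ would be positive, forcing $y > M$ as integers and contradicting the choice of $M$. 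Thus $P(A)$ always has at least one maximal element, and the content of the proposition is that the UM hypothesis forbids any other.

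Next I would suppose, toward a contradiction, that $P(A)$ admits a maximal element $m \neq M$. Since $m$ is maximal, $P(A) \setminus \{ m \}$ is still a down-set, hence an order ideal; by \hyperref[bijection]{Lemma \ref*{bijection}} it equals $\beta (\lambda)$ for some $A$-core $\lambda$ with $\ell (\lambda) = L - 1$. Because $m \neq M$, the integer $M$ still lies in $\beta (\lambda)$, so $M = \max \beta (\lambda)$. I would then recover the first parts of $\lambda$ and of $\kappa_A$ from their $\beta$-sets via the identity $\mu_1 = \max \beta (\mu) - (\ell (\mu) - 1)$, which merely records that the largest first-column hook length of $\mu$ equals $\mu_1 + \ell (\mu) - 1$. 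This yields $\lambda_1 = M - (L - 2)$ and $(\kappa_A)_1 = M - (L - 1)$, so that $\lambda_1 = (\kappa_A)_1 + 1$.

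Since $\lambda_1 > (\kappa_A)_1$, the core $\lambda$ cannot be a subpartition of $\kappa_A$, contradicting the assumption that $A$ is UM (which requires $\kappa_A$ to contain every $A$-core). Hence no maximal element besides $M$ can exist, and $P(A)$ is poset-UM. The argument is short, and I do not expect a serious obstacle; the only steps needing care are the length bookkeeping — deleting a single element of the ideal drops the length by exactly one, which is precisely what produces the fatal $+1$ discrepancy in the first row — and the routine check that $P(A) \setminus \{ m \}$ is genuinely an order ideal. The real conceptual crux is recognizing that $\beta (\kappa_A)$ being the \emph{entire} poset is exactly what forces the deletion of any non-largest maximal element to overshoot in the first part.
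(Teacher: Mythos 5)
Your proposal is correct and follows essentially the same route as the paper: both use Corollary \ref{uniquemaximalelement} to identify $\beta(\kappa_A)$ with $P(A)$, produce a proper order ideal still containing the largest integer of $P(A)$ (you construct it explicitly as $P(A)\setminus\{m\}$ for a non-largest maximal element $m$, where the paper just asserts its existence), and derive the contradiction $\lambda_1 > (\kappa_A)_1$ from the identity $\mu_1 = \max\beta(\mu) - \ell(\mu) + 1$. The only difference is that you spell out the existence of the ideal and the empty case slightly more carefully; the argument is the same.
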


\begin{proof}
Suppose that $P(A)$ is not poset-UM but that $A$ is UM. Consider the element $m \in P(A)$ of maximum magnitude; for instance, if $A = (3, 7, 11)$, then $m = 8$. Since $P(A)$ is not poset-UM, there is an order ideal $I \subseteq P(A)$ containing $m$ but not equal to $P(A)$. By \hyperref[bijection]{Lemma \ref*{bijection}}, there are $A$-cores $\lambda = (\lambda_1, \lambda_2, \ldots , \lambda_r)$ and $\kappa = \kappa_A = (\kappa_1, \kappa_2, \ldots , \kappa_s)$ such that $\beta (\lambda) = I$ and $\beta (\kappa) = P(A)$. By \hyperref[uniquemaximalelement]{Corollary \ref*{uniquemaximalelement}}, $\kappa$ is the unique maximal element of $C(A)$; therefore, $\lambda \subset \kappa$. Hence, $m - r + 1 = \lambda_1 \le \kappa_1 = m - s + 1$, which is a contradiction since $s = |P(A)| > |I| = r$. 
\end{proof}

As an application, the above proposition implies that $(3, 7, 11)$ is not UM. Observe that the converse of \hyperref[posetum]{Proposition \ref*{posetum}} does not always hold. For instance, suppose that $A = \{ 4, 5, 6 \}$; then, $P(A) = \{ 1, 2, 3, 7 \}$ is poset-UM with unique maximal element $7$. Therefore the longest $A$-core is $\kappa_A = (3, 1, 1, 1)$, which does not contain the $A$-core $(2, 2)$ as a subpartition. 

We will now classify all triples of positive integers $(a, b, c)$ whose associated posets $P(a, b, c)$ are poset-UM. The following proposition forms a bijection between the maximal elements of $P(a, b, c)$ and the maximal elements of $P(d, e, f)$ for particular triples $(a, b, c)$ and $(d, e, f)$. 

\begin{prop}
\label{maximalelements}
Suppose that $(a, b, c)$ is a triple of positive integers with $\gcd (a, b, c) = 1$; let $\gcd (a, b) = p$, $\gcd (a, c) = q$, $\gcd (b, c) = r$, and $d, e, f$ be integers such that $(a, b, c) = (dpq, epr, fqr)$ as ordered triples. For $m$ and $n$ positive integers, $(m - 1) d + (n - 1) e - f$ is a maximal element of $P(d, e, f)$ if and only if $(mr - 1) dpq + (nq - 1) epr - fqr$ is a maximal element of $P(a, b, c)$. 
\end{prop}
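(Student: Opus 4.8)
The plan is to build an explicit, membership-preserving ``transfer'' between $S(a,b,c)$ and $S(d,e,f)$ and then to track the four quantities $N,\,N+a,\,N+b,\,N+c$ through it, where $N=(mr-1)a+(nq-1)b-c$ is the right-hand element. First I would record the arithmetic the hypotheses force. Since $\gcd(a,b,c)=1$, the integers $p,q,r$ are pairwise coprime, as are $d,e,f$, and, crucially, each generator is coprime to exactly one of $p,q,r$: because $r\mid b,c$, $q\mid a,c$, and $p\mid a,b$, we get $\gcd(a,r)=\gcd(b,q)=\gcd(c,p)=1$. I would also use the standard reduction that $g\in P(A)$ is maximal if and only if $g+a,g+b,g+c\in S(A)$; since $a,b,c$ generate $S(A)$, testing the three generators suffices.

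The engine is a transfer lemma. Given $N\in\mathbb{Z}_{\ge0}$, the congruences $x_0a\equiv N\pmod r$, $y_0b\equiv N\pmod q$, $z_0c\equiv N\pmod p$ have unique solutions $x_0\in[0,r-1]$, $y_0\in[0,q-1]$, $z_0\in[0,p-1]$ by the coprimality just noted. One checks that $N-x_0a-y_0b-z_0c$ is divisible by $p$, by $q$, and by $r$ separately, hence by $pqr$, so $M:=(N-x_0a-y_0b-z_0c)/(pqr)$ is an integer. The identities $ra=pqr\,d$, $qb=pqr\,e$, $pc=pqr\,f$ then yield $N\in S(a,b,c)$ if and only if $M\ge0$ and $M\in S(d,e,f)$: any representation of $N$ must take the coefficient of $a$ in its class mod $r$ (hence $\ge x_0$), and likewise for $b,c$, after which the leftover mass is a multiple of $pqr$ whose quotient must lie in $S(d,e,f)$.

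With this in hand I would specialize to $N=(mr-1)a+(nq-1)b-c$. A direct residue computation gives $(x_0,y_0,z_0)=(r-1,q-1,p-1)$ and $M=(m-1)d+(n-1)e-f=:\tilde N$, the left-hand element. Repeating the computation for the three shifts produces the clean correspondences $N+a\mapsto\tilde N+d$, $N+b\mapsto\tilde N+e$, $N+c\mapsto\tilde N+f$ (each shift moves exactly one of $x_0,y_0,z_0$ across its modulus and changes $M$ by the matching $d$, $e$, or $f$). When $\tilde N\ge0$, the maximality criterion transfers in both directions: $N\in P(a,b,c)\iff\tilde N\in P(d,e,f)$, and $N+a,N+b,N+c\in S(a,b,c)\iff\tilde N+d,\tilde N+e,\tilde N+f\in S(d,e,f)$, so $N$ is maximal precisely when $\tilde N$ is.

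The step I expect to be the main obstacle is the sign of $\tilde N$, that is, the case $M=\tilde N<0$. Here $N$ is automatically a gap (the lemma gives $N\notin S(a,b,c)$ whenever $M<0$), and the forward implication is harmless because a maximal element of $P(d,e,f)$ is nonnegative. The danger is the converse: I must rule out $N$ being maximal while $\tilde N<0$. Assuming $N$ maximal forces $\tilde N+d,\tilde N+e,\tilde N+f\ge0$ and all in $S(d,e,f)$; writing $g=\min(d,e,f)$, the value $\tilde N+g$ lies in $[0,g-1]\cap S(d,e,f)=\{0\}$, so $\tilde N=-g$, and then the second-smallest of $d,e,f$ minus $g$ would have to lie in $S(d,e,f)$, which the pairwise coprimality of $d,e,f$ forbids. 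This is the delicate point where the full strength of the coprimality hypotheses is used, and where I would concentrate the care, including the genuinely degenerate subcase in which one of $d,e,f$ equals $1$ (so that $S(d,e,f)=\mathbb{Z}_{\ge0}$ and $P(d,e,f)=\emptyset$), which must be dispatched on its own.
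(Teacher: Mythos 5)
Your CRT-based transfer lemma is correct, and in the main case your argument is complete and is in fact a cleaner packaging of what the paper does: the paper establishes only the forward implication by hand (extracting the congruences $h\equiv -1 \pmod r$, $i\equiv -1\pmod q$, $j\equiv -1\pmod p$ from a putative representation of $t$) and waves at the converse with ``similar reasoning,'' whereas your reduction $N\mapsto M$ together with the correspondences $N+a\mapsto \tilde N+d$, $N+b\mapsto \tilde N+e$, $N+c\mapsto \tilde N+f$ handles both directions uniformly. You have also correctly isolated the one genuinely dangerous point, namely the converse when $\tilde N<0$.

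The gap is that the degenerate subcase you defer (``one of $d,e,f$ equals $1$'') cannot be dispatched: the biconditional is actually \emph{false} there, so your plan as stated cannot be completed. Take $(a,b,c)=(10,21,35)$, so $p=1$, $q=5$, $r=7$, $(d,e,f)=(2,3,1)$, and $m=n=1$. Then $t=(7-1)\cdot 10+(5-1)\cdot 21-35=109$ is a maximal element of $P(10,21,35)$: one checks $109\notin S(10,21,35)$ while $119=35+4\cdot 21$, $130=13\cdot 10$, and $144=6\cdot 10+4\cdot 21$ all lie in $S(10,21,35)$. But $s=(1-1)\cdot 2+(1-1)\cdot 3-1=-1$ is not a maximal element of $P(2,3,1)=\emptyset$. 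This is exactly your case $\tilde N=-g$ with $g=\min(d,e,f)=1$: all of $\tilde N+d$, $\tilde N+e$, $\tilde N+f$ are nonnegative and lie in $S(d,e,f)=\mathbb{Z}_{\ge 0}$, and your intended contradiction (that the second-smallest generator minus $g$ cannot lie in $S(d,e,f)$) only fires when $g\ge 2$, since the argument forces $g$ to divide the second-smallest generator and pairwise coprimality then gives $g=1$, not a contradiction. The correct conclusion is that the proposition needs an extra hypothesis --- e.g.\ that $(m-1)d+(n-1)e-f$ is positive, or that $\min(d,e,f)\ge 2$ --- under which your proof goes through; the paper's own unwritten converse has the same latent defect, though in its later applications the relevant elements of $P(d,e,f)$ are always genuine (positive) maximal elements.
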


\begin{proof}
Let $s = (m - 1) d + (n - 1) e - f$ and $t = (mr - 1) dpq + (nq - 1) epr - fqr$. Suppose that $s$ is a maximal element of $P(d, e, f)$; we will show that $t$ is a maximal element of $P(a, b, c)$. Let us first verify that $t \in P(a, b, c)$. Suppose to the contrary that $t \in S(a, b, c)$. Then, there are nonnegative integers $h, i, j$ such that $(mr - 1) dpq + (nq - 1) epr - fqr = hdpq + iepr + jfqr$. Since $\gcd (a, b, c) = 1$, we have that $\gcd (r, p) = \gcd (r, q) = 1 = \gcd (r, d)$. The previous equality implies that $(mr - 1 - h) dpq$ is a multiple of $r$, which yields $h = rh' - 1$ for some positive integer $h'$. Similarly, $i = qi' - 1$ for some positive integer $i'$; therefore, $(m - h') dp + (n - i') ep = (j + 1) f$. Thus, $j = j' p - 1$ for some positive integer $j'$; hence $s = (m - 1) d + (n - 1) e - f = (h' - 1) d + (i' - 1) e + (j' - 1) f \in S(d, e, f)$, which is a contradiction. 

Now, to see that $t$ is maximal, it suffices to verify that $t + a, t + b, t + c \in S(a, b, c)$. Observe that $t + a = mdpqr + (nq - 1) epr - fqr = pqr (s + d) + epr (q - 1) + fqr (p - 1) \in S(dpq, epr, fqr)$ because $s + d \in S(d, e, f)$ by the maximality of $s$. By similar reasoning, $t + b \in S(a, b, c)$; since $t + c = (mr - 1) dpq + (nq - 1) epr \in S(a, b, c)$, it follows that $t$ is a maximal element of $P(a, b, c)$. 

This implies that $t$ is a maximal element of $P(a, b, c)$ if $s$ is a maximal element of $P(d, e, f)$. Through similar reasoning, one may show that $s$ is a maximal element of $P(d, e, f)$ if $t$ is a maximal element of $P(a, b, c)$. 
\end{proof}

\noindent The following corollary reduces the classification of triples $(a, b, c)$ whose associated posets $P(a, b, c)$ are poset-UM to the case when $a$, $b$, and $c$ are pairwise relatively prime. 

\begin{cor}
\label{notrelativelyprime}
Suppose that $(a, b, c)$ is a triple of positive integers such that $\gcd (a, b, c) = 1$; let $\gcd (a, b) = p$, $\gcd (a, c) = q$, $\gcd (b, c) = r$, and $d, e, f$ be integers such that $(a, b, c) = (dpq, epr, fqr)$ as ordered triples. Then, $P(a, b, c)$ is poset-UM if and only if $P(d, e, f)$ is poset-UM. 
\end{cor}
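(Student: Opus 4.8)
The plan is to leverage \hyperref[maximalelements]{Proposition \ref*{maximalelements}} to set up a bijection between the maximal elements of $P(d, e, f)$ and those of $P(a, b, c)$; since a finite poset is poset-UM precisely when it has exactly one maximal element (and both posets here are finite, because $\gcd (a, b, c) = 1$ forces $\gcd (d, e) = 1$ and hence $\gcd (d, e, f) = 1$), matching the cardinalities of these two sets of maximal elements will immediately yield the equivalence. To this end, define $\Phi (m, n) = (m - 1) d + (n - 1) e - f$ and $\Psi (m, n) = (mr - 1) dpq + (nq - 1) epr - fqr$ for positive integers $m, n$, so that \hyperref[maximalelements]{Proposition \ref*{maximalelements}} says exactly that $\Phi (m, n)$ is maximal in $P(d, e, f)$ if and only if $\Psi (m, n)$ is maximal in $P(a, b, c)$.

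The first task is to check that \emph{every} maximal element is captured by these parametrizations. For $P(d, e, f)$: if $s$ is maximal then $s + f \in S(d, e, f)$, and writing $s + f = hd + ie + jf$ with $h, i, j \ge 0$, one must have $j = 0$ (else $s = hd + ie + (j - 1) f \in S(d, e, f)$, contradicting $s \notin S(d, e, f)$); thus $s = \Phi (h + 1, i + 1)$. For $P(a, b, c)$ the same device applied to $t + c = t + fqr \in S(a, b, c)$ shows $t = h\, dpq + i\, epr - fqr$ with $h, i \ge 0$, but now I must further verify the congruences $r \mid h + 1$ and $q \mid i + 1$ to land in the image of $\Psi$.

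Establishing these congruences is the main obstacle, and the key idea is to extract a \emph{second} representation of $t$ from a different maximality condition. Since $t + a = t + dpq \in S(a, b, c)$, the same coefficient-killing argument (now on the $dpq$-term) gives $t = I\, epr + J\, fqr - dpq$ with $I, J \ge 0$; equating the two expressions for $t$ yields $(h + 1) dpq = (I - i) epr + (J + 1) fqr$, and reducing modulo $r$ — using $\gcd (dpq, r) = 1$, which follows from $\gcd (d, r) = \gcd (p, r) = \gcd (q, r) = 1$ — forces $r \mid h + 1$. Symmetrically, using $t + b \in S(a, b, c)$ to write $t = H'\, dpq + J'\, fqr - epr$ and reducing modulo $q$ (via $\gcd (epr, q) = 1$) gives $q \mid i + 1$. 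Hence $t = \Psi (m, n)$ for some positive integers $m, n$, so the maximal elements of $P(a, b, c)$ all lie in the image of $\Psi$.

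Finally I would verify that the two parametrizations have identical fibers: both $\Phi (m, n) = \Phi (m', n')$ and $\Psi (m, n) = \Psi (m', n')$ reduce to the single relation $(m - m') d = (n' - n) e$ (for the latter, after dividing through by $pqr$). Consequently the assignment $\Phi (m, n) \mapsto \Psi (m, n)$ descends to a well-defined bijection between the images of $\Phi$ and $\Psi$, which by \hyperref[maximalelements]{Proposition \ref*{maximalelements}} carries maximal elements to maximal elements in both directions. Restricting this bijection to the maximal elements — which, by the preceding two paragraphs, exhaust the relevant portion of each image — shows that $P(d, e, f)$ and $P(a, b, c)$ have the same number of maximal elements, and therefore one is poset-UM if and only if the other is, completing the proof.
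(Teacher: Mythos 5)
Your proposal is correct and follows essentially the same route as the paper: both hinge on \hyperref[maximalelements]{Proposition \ref*{maximalelements}}, write each maximal element of $P(d,e,f)$ (resp.\ $P(a,b,c)$) in the form $(m-1)d+(n-1)e-f$ (resp.\ with the extra divisibility by $r$ and $q$ extracted from $t+a, t+b \in S(a,b,c)$), and conclude by matching maximal elements. The only cosmetic difference is that you package the two contrapositive implications of the paper as a single fiber-preserving bijection between the sets of maximal elements.
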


\begin{proof}
Suppose that $P(d, e, f)$ is not poset-UM and let $s_1$ and $s_2$ be two distinct maximal elements of $P(d, e, f)$. Since $s_1$ and $s_2$ are maximal, we have that $s_1 + f, s_2 + f \in S(d, e)$; thus, there are positive integers $m_1, n_1, m_2, n_2$ such that $s_1 = (m_1 - 1) d + (n_1 - 1) e - f$ and $s_2 = (m_2 - 1) d + (n_2 - 1) e - f$. Let $t_1 = (r m_1 - 1) dpq + (q n_1 - 1) epr - fqr$ and $t_2 = (r m_2 - 1) dpq + (q n_2 - 1) epr - fqr$; by \hyperref[maximalelements]{Proposition \ref*{maximalelements}}, $t_1$ and $t_2$ are maximal elements of $P(a, b, c)$. Since $s_1$ and $s_2$ are distinct, $t_1 \ne t_2$; thus $P(a, b, c)$ has two distinct maximal elements and is therefore not poset-UM. 

Now, suppose that $P(a, b, c)$ is not poset-UM and let $t_1$ and $t_2$ be two distinct maximal elements of $P(a, b, c)$. As above, there are positive integers $m_1', n_1', m_2', n_2'$ such that $t_1 = (m_1' - 1) dpq + (n_1' - 1) epr - fqr$ and $t_2 = (m_2' - 1) dpq + (n_2' - 1) epr - fqr$. We claim that $m_1'$ is a multiple of $r$. Indeed, since $t_1$ is maximal, $m_1' dpq + (n_1' - 1) epr - fqr = t_1 + a \in S(a, b, c)$; therefore, there are nonnegative integers $h, i, j$ such that $m_1' dpq + (n_1' - 1) epr - fqr = hdpq + iepr + jfqr$. Since $t_1 \notin S(a, b, c)$, we have that $h = 0$. Therefore, $r$ divides $m_1' dpq$; the fact that $\gcd (d, r) = \gcd (p, r) = \gcd (q, r) = 1$ thus yields $r$ divides $m_1'$. Hence, there is an integer $m_1$ such that $m_1' = m_1 r$; by similar reasoning, there are integers $n_1, m_2, n_2$ such that $n_1' = n_1 q$, $m_2' = m_2 r$, and $n_2' = n_2 q$. By \hyperref[maximalelements]{Proposition \ref*{maximalelements}}, $s_1 = (m_1 - 1) d + (n_1 - 1) e - f$ and $s_2 = (m_2 - 1) d + (n_2 - 1) e - f$ are unique maximal elements of $P(d, e, f)$. Since $t_1$ and $t_2$ are distinct, $s_1 \ne s_2$; this implies that $P(d, e, f)$ has two maximal elements and is therefore not poset-UM. 
\end{proof}

\noindent Now we classify all triples of pairwise relatively prime positive integers $(a, b, c)$ such that $P(a, b, c)$ is poset-UM. 

\begin{prop}
\label{umrelativelyprime}
If $a < b < c$ are pairwise relatively prime positive integers, then $P(a, b, c)$ is poset-UM if and only if $c \in S(a, b)$. 
\end{prop}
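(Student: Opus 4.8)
The plan is to recast the poset-UM condition as a statement about the numerical semigroup $S := S(a,b,c)$ itself. Recall that $P(a,b,c)$ is exactly the finite set of gaps of $S$, ordered by $x \preceq y \iff y - x \in S$. In a finite poset a unique maximal element is automatically a greatest element, so I would first note that $P(a,b,c)$ is poset-UM iff it has a greatest element. The Frobenius number $F$ of $S$ (its largest gap) is always a maximal element, since $F+a, F+b, F+c > F$ all lie in $S$, and it is the only possible greatest element, being the gap of largest magnitude; hence $P(a,b,c)$ is poset-UM iff $F - n \in S$ for every gap $n$, i.e. iff $S$ is \emph{symmetric}. The forward implication is then immediate: if $c \in S(a,b)$ then $S = S(a,b)$, and by Lemma \ref{twoposet} its gaps are the positive integers $ab - ja - hb$ with $1 \le j \le b-1$ and $1 \le h \le a-1$, with $F = ab-a-b$; for such a gap I compute $F - n = (j-1)a + (h-1)b \in S(a,b)$, so $S$ is symmetric and $P(a,b,c)$ is poset-UM.

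For the converse I would argue the contrapositive: assuming $c \notin S(a,b)$, I must show $S$ is not symmetric, equivalently that $P(a,b,c)$ has a maximal element other than $F$. Here I would pass to the Ap\'ery set $\mathrm{Ap}(S,a) = \{W_0, \dots, W_{a-1}\}$, where $W_i$ is the least element of $S$ in the residue class $i \bmod a$; its maximal elements under $\preceq$ correspond, after subtracting $a$, exactly to the maximal gaps of $S$, so it suffices to produce two maximal elements of $\mathrm{Ap}(S,a)$. The key numerical input is that, letting $t \in [1,a-1]$ be determined by $c \equiv tb \pmod a$, the hypothesis $c \notin S(a,b)$ forces $c < tb$: otherwise $c - tb$ is a nonnegative multiple of $a$ and $c = tb + (c - tb) \in S(a,b)$. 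Consequently the least element of $S$ in the class of $c$ is $c$ itself rather than $tb$, so $c$ genuinely appears in $\mathrm{Ap}(S,a)$, and the Ap\'ery set is generated from $0$ by the two operations $x \mapsto x+b$ and $x \mapsto x+c$, not by $+b$ alone — in contrast to $S(a,b)$, whose Ap\'ery set is the single chain $0 \prec b \prec \cdots \prec (a-1)b$ with unique maximum $F = ab - a - b$.

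The hard part, and the step where pairwise coprimality is essential, is to show that adjoining the second operation $+c$ creates a second maximal element, i.e. that the now genuinely two-dimensional Ap\'ery poset has at least two sinks. I would reindex the classes by multiples of $b$, so that $+b$ advances the index by $1$ and $+c$ advances it by $t$ on $\mathbb{Z}/a\mathbb{Z}$ (with $\gcd(t,a)=1$), and classify each class according to whether its Ap\'ery element is reached by a final $+b$ step, a final $+c$ step, or both; the maximal elements are precisely the classes admitting neither continuation. Proving that this sink set has size at least two is the main obstacle. It is equivalent to the classical fact that a numerical semigroup minimally generated by three pairwise coprime integers is never symmetric, and I expect the coprimality of $a$, $b$, $c$ to enter exactly by preventing the $+b$- and $+c$-translates of $\mathbb{Z}/a\mathbb{Z}$ from closing up into the single rectangular Ap\'ery structure that symmetry would force.
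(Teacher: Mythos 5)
Your reduction of the poset-UM condition to symmetry of the numerical semigroup $S = S(a,b,c)$ is correct and is a genuinely different framing from the paper's: in a finite poset a unique maximal element is automatically the greatest element, the Frobenius number $F$ is always maximal in $P(a,b,c)$, and it is the greatest element precisely when $F - n \in S$ for every gap $n$, which is the definition of a symmetric semigroup. The forward direction is complete and correct. The problem is the converse, which is where essentially all of the content of this proposition lives, and there you stop exactly at the decisive step. You correctly set up the Ap\'ery poset $\mathrm{Ap}(S,a)$, correctly note that its maximal elements correspond (after subtracting $a$) to the maximal gaps, and correctly identify that you must exhibit two sinks --- but then you write that this ``is the main obstacle,'' that it ``is equivalent to the classical fact that a numerical semigroup minimally generated by three pairwise coprime integers is never symmetric,'' and that you ``expect'' coprimality to enter in a certain way. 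That is a restatement of what remains to be shown, not a proof of it. (The equivalence with the classical fact also silently uses minimal generation; this does hold here, since $a<b<c$ and $c\notin S(a,b)$ force $a\notin S(b,c)$ and $b\notin S(a,c)$, but you do not check it.)

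For comparison, the paper proves this hard half from scratch: it takes the largest $k$ with $ic \notin S(a,b)$ for all $i \in [1,k]$, writes $ic = ab - s_i a - t_i b$, establishes that the sequences $(s_i)$ and $(t_i)$ are anti-monotonically related, proves $k = m + n - 1$ where $s_m$ and $t_n$ are the respective minima, and then explicitly constructs two distinct maximal elements $p_{k'-1}$ and $p_{k'}$ of $P(a,b,c)$. To complete your route you would need either a comparable explicit construction of two maximal elements of $\mathrm{Ap}(S,a)$ (for instance, two distinct ``corners'' of the staircase $\{(j,k) : jb + kc \in \mathrm{Ap}(S,a)\}$, which is roughly the same combinatorial work in different coordinates), or a genuine citation of Herzog's theorem that a three-generated symmetric numerical semigroup is a complete intersection together with the fact that a complete intersection on three minimal generators forces two generators to share a common factor. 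As written, the converse --- the substance of the proposition --- is missing.
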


\begin{proof} 
If $c \in S(a, b)$, then $P(a, b, c) = P(a, b)$ because $S(a, b, c) = S(a, b)$; therefore, the proposition follows from \hyperref[twoposet]{Lemma \ref*{twoposet}}. Now suppose that $c \notin S(a, b)$; we will show that $P(a, b, c)$ has at least two distinct maximal elements. By \hyperref[twoposet]{Lemma \ref*{twoposet}}, there are positive integers $s_1 \in [1, b - 1]$ and $t_1 \in [1, a - 1]$ such that $c = ab - s_1 a - t_1 b$. Let $k$ be the largest positive integer such that $ic \notin S(a, b)$ for each integer $i \in [1, k]$. By \hyperref[twoposet]{Lemma \ref*{twoposet}}, there are integers $s_1, s_2, \ldots , s_k \in [1, b - 1]$ and $t_1, t_2, \ldots , t_k \in [1, a]$ such that $ic = ab - s_i a - t_i b$ for each integer $i \in [1, k]$; moreover, let $(s_0, t_0) = (0, a)$ and $(s_{k + 1}, t_{k + 1}) = (b, 0)$. Observe that $t_i < t_j$ if and only if $s_i > s_j$ for each $i, j \in [0, k + 1]$. Indeed, otherwise there would exist some $i > j$ such that $t_i < t_j$ and $s_i < s_j$; this would imply that $(i - j) c = (s_j - s_i) a + (t_j - t_i) b \in S(a, b)$, which is a contradiction since $i - j \in [1, k]$. 

Let $m, n \in [1, k]$ be integers such that $s_m = \min_{i \in [1, k]} s_i$ and $t_n = \min_{i \in [1, k]} t_i$. Observe by the above that $s_n = \max_{i \in [1, k]} s_i$ and $t_m = \max_{i \in [1, k]} t_i$. We claim that $k = m + n - 1$. We will first show that $k < m + n - 1$; suppose otherwise, so in particular $(m + n) c = 2ab - (s_m + s_n) a - (t_m + t_n) b \notin S(a, b)$. Thus $2ab - (s_m + s_n) a - (t_m + t_n) b = ab - s_{m + n} a -  t_{m + n} b$, so $ab = (s_m + s_n - s_{m + n}) a + (t_m + t_n - t_{m + n}) b$. Since $|s_m + s_n - s_{m + n}| < 2b$, $|t_m + t_n - t_{m + n}| < 2a$, and $\gcd (a, b) = 1$, this implies that either $s_m + s_n - s_{m + n} = b$ and $t_m + t_n = t_{m + n}$ or $s_m + s_n = s_{m + n}$ and $t_m + t_n - t_{m + n} = a$. Without loss of generality, suppose that the former holds; then $s_{m + n} = s_m + s_n - b < s_m$, which contradicts the minimality of $s_m$. Therefore, $k \le m + n - 1$. 

To see that $k \ge m + n - 1$, observe that $(m + n - i) c = ab - (s_m + s_n - s_i) a - (t_m + t_n - t_i) b$ for each integer $i \in [1, k]$. Since $0 < s_m \le s_i \le s_n$ and $0 < t_n \le t_i \le t_m$, we obtain that $(m + n - i) c$ is of the form $ab - ah - bj$ for some positive integers $h$ and $j$; hence \hyperref[twoposet]{Lemma \ref*{twoposet}} implies that $ic \notin S(a, b)$ for each integer $i \in [1, m + n - 1]$, which yields $k = m + n - 1$. 

Now let $\{ r_0, r_1, \ldots , r_{k + 1} \}$ be a permutation of $\{ 0, 1, 2, \ldots , k, k + 1 \}$ such that $s_{r_0} < s_{r_1} < \cdots < s_{r_{k + 1}}$. Observe that $t_{r_k} < t_{r_{k - 1}} < \cdots < t_{r_1}$; $r_0 = 0$; $r_{k + 1} = k + 1$; $r_1 = m$; and $r_k = n$. Moreover, let $p_i = ab - (s_{r_i} + 1) a - (t_{r_{i + 1}} + 1) b$ for each integer $i \in [0, k]$. Observe that $p_i \in P(a, b, c)$ for each integer $i \in [0, k]$; indeed, suppose to the contrary that there is some $p_i \in S(a, b, c)$. Then, there would exist integers $f \in [0, b]$, $h \in [0, a]$, and $j \in [0, k]$ such that $ab - (s_{r_i} + 1)a - (t_{r_{i + 1}} + 1)b = ab - (s_j - f) a - (t_j - h) b$. Thus $s_j > s_{r_i}$ and $t_j > t_{r_{i + 1}} > t_{r_i}$, which is a contradiction. Furthermore, observe that $p_i + a \notin P(a, b, c)$ for each integer $i \in [0, k]$ since $p_i + a = ab - s_{r_i} a - (t_{r_{i + 1}} + 1) b = r_i c + (t_{r_i} - t_{r_{i + 1}} - 1) b$ and $t_{r_i} > t_{r_{i + 1}}$; by similar reasoning, $p_i + b \notin P(a, b, c)$. 

Let $k' \in [1, k]$ be the integer such that $r_{k'} = k$. Then $(k - r_{k' - 1}) c = ab - (s_{r_{k'}} - s_{r_{k' - 1}}) a - (a + t_{r_{k'}} - t_{r_{k' - 1}}) b$, so the minimality of $s_{r_m}$ implies that $r_{k' - 1} = k - m$; similarly, $r_{k' + 1} = k - n$. We claim that $p_{k' - 1} + c \in S(a, b, c)$. Supposing otherwise, \hyperref[twoposet]{Lemma \ref*{twoposet}} implies that there exist positive integers $j$ and $h$ such that $2ab - (s_{k - m} + s_1 + 1) a - (t_k + t_1 + 1)b = p_{k'} + c = ab - ja - hb$; therefore, $(s_{k - m} + s_1 + 1 - j) a + (t_k + t_1 + 1 - h) b = ab$. Since $|s_{k - m} + s_1 + 1 - j| < 2b$, $|t_k + t_1 + 1 - h| < 2a$, and $\gcd (a, b) = 1$, we have that either $s_{k - m} + s_1 \ge b$ or $t_k + t_1 \ge a$.

Since $(k - m) c = (n - 1) c = ab - (s_n - s_1) a - (a + t_n - t_1) b$, we have that $s_{k - m} + s_1 = s_n < b$. This implies that $t_k + t_1 \ge a$. Since $k = m + n - 1$, we have that $s_k = s_m + s_n - s_1$ and $t_k = t_m + t_n - t_1$. Moreover, since $ab - (s_m + s_n) a - (t_m + t_n - a) b = (m + n) c \in S(a, b)$, we have that $t_k + t_1 = t_m + t_n \le a$ by \hyperref[twoposet]{Lemma \ref*{twoposet}}. Thus, $t_m + t_n = a$ and hence $a(b - s_m - s_n) = (m + n) c$. Since $a$ and $c$ are relatively prime, $b - s_m - s_n$ is a multiple of $c$; this contradicts the fact that $c < b$. Therefore, $p_{k' - 1} + c \notin P(a, b, c)$. Since $p_{k' - 1} + a \notin P(a, b, c)$, $p_{k - 1} + b \notin P(a, b, c)$, and $p_{k' - 1} \in P(a, b, c)$, it follows that $p_{k' - 1}$ is a maximal element of $P(a, b, c)$. 

By similar reasoning, $p_{k'}$ is a maximal element of $P(a, b, c)$; since $p_{k'} \ne p_{k' - 1}$, this yields that $P(a, b, c)$ has two distinct maximal elements and is therefore not poset-UM. 
\end{proof}

\noindent We may now classify all triples of positive integers $(a, b, c)$ for which $P(a, b, c)$ is poset-UM. 

\begin{cor}
\label{classification}
Suppose that $(a, b, c)$ is a triple of positive integers such that $\gcd (a, b, c) = 1$; let $p = \gcd (a, b)$, $q = \gcd (a, c)$, $r = \gcd (b, c)$, and $d, e, f$ be integers such that $(a, b, c) = (dpq, epr, fqr)$ as ordered triples. Then, $P(a, b, c)$ is poset-UM if and only if $(d, e, f)$ is aprimitive. 
\end{cor}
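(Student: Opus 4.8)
The plan is to reduce to the pairwise coprime case already settled by Proposition \ref{umrelativelyprime}, pass from $(a,b,c)$ to $(d,e,f)$ via Corollary \ref{notrelativelyprime}, and then translate the semigroup-membership criterion of that proposition into aprimitivity by a short elementary lemma. The one piece of genuinely new work is that last translation.

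First I would record that $d$, $e$, and $f$ are pairwise relatively prime positive integers. Since $\gcd(a,b,c)=1$, no prime can divide two of $p,q,r$, so $p,q,r$ are pairwise coprime; consequently $pq\mid a$, $pr\mid b$, and $qr\mid c$, and the quotients $d,e,f$ are honest positive integers. A quick $\ell$-adic valuation check, using that $\min\{v_\ell(a),v_\ell(b),v_\ell(c)\}=0$ for every prime $\ell$, shows that each prime divides at most one of $d,e,f$; hence $d,e,f$ are pairwise coprime. This is exactly the hypothesis needed to feed $(d,e,f)$ into Proposition \ref{umrelativelyprime}.

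Next I invoke the structural results in order. By Corollary \ref{notrelativelyprime}, $P(a,b,c)$ is poset-UM if and only if $P(d,e,f)$ is poset-UM. Writing $\{d,e,f\}=\{u,v,w\}$ with $u\le v\le w$, Proposition \ref{umrelativelyprime} applied to this pairwise coprime triple says that $P(d,e,f)$ is poset-UM if and only if $w\in S(u,v)$. Thus the corollary reduces to the purely arithmetic equivalence
\[
w\in S(u,v)\iff (d,e,f)\text{ is aprimitive}.
\]

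The main step is this equivalence, which I would isolate as a lemma on pairwise coprime positive integers $u\le v\le w$: at least one of $u,v,w$ lies in the semigroup generated by the other two if and only if $w\in S(u,v)$. The backward direction is immediate, since $w\in S(u,v)$ is one of the three defining alternatives. For the forward direction the key observation is that among pairwise coprime integers only the largest can be generated by the smaller ones: expanding $u=iv+jw$ with $i,j\ge 0$ forces $i+j\le 1$ by size, and coprimality of the generators collapses the surviving case to $u=1$; likewise $v=iu+jw$ forces $j=0$, hence $u\mid v$ and again $u=1$. Since $u=1$ trivially yields $w=w\cdot u\in S(u,v)$, both of the "non-largest" alternatives already imply $w\in S(u,v)$, which proves the lemma and hence the corollary.

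I expect the only delicate point to be bookkeeping in the degenerate cases where some of $d,e,f$ equals $1$ (forced ties, since pairwise coprimality makes any repeated entry equal to $1$): there the relevant numerical semigroup becomes all of $\mathbb{Z}_{\ge 0}$, so $P(d,e,f)$ is empty, and one should check directly that the statements remain consistent under the poset-UM convention. All the substantive content sits in the size-and-coprimality argument of the lemma, which pins down precisely that, among pairwise coprime integers, only the largest can be expressed through the other two.
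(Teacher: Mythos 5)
Your proposal is correct and follows essentially the same route as the paper, whose proof of this corollary is simply the one-line deduction from Corollary \ref{notrelativelyprime} and Proposition \ref{umrelativelyprime}. The extra work you supply --- checking that $d, e, f$ are pairwise coprime and proving that for pairwise coprime $u \le v \le w$ aprimitivity is equivalent to $w \in S(u, v)$ --- is exactly the detail the paper leaves implicit, and your handling of it (including the flagged degenerate case where some entry equals $1$) is sound.
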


\begin{proof}
This follows from \hyperref[notrelativelyprime]{Corollary \ref*{notrelativelyprime}} and \hyperref[umrelativelyprime]{Proposition \ref*{umrelativelyprime}}. 
\end{proof}

\noindent We may now establish \hyperref[umtriples]{Theorem \ref*{umtriples}}.  

\begin{proof}[Proof of Theorem 1.3]
This follows from \hyperref[posetum]{Proposition \ref*{posetum}} and \hyperref[classification]{Corollary \ref*{classification}}. 
\end{proof}

\noindent Using \hyperref[umtriples]{Theorem \ref*{umtriples}}, we may now establish \hyperref[maximalcore]{Theorem \ref*{maximalcore}}. 

\begin{proof}[Proof of Theorem 1.5] 
Let $f = md + ne$ for some nonnegative integers $m$ and $n$. By \hyperref[twoposet]{Lemma \ref*{twoposet}}, the unique maximal element of $P(d, e, f)$ is $de - d - e = (d + n - 1)e + (m - 1)d - f$. By \hyperref[posetum]{Proposition \ref*{posetum}}, $P(A)$ has a unique maximal element; by \hyperref[maximalelements]{Proposition \ref*{maximalelements}}, the unique maximal element of $P(A)$ is $(de + md + ne) pqr - dpq - epr - fqr = (de + f) pqr - a - b - c$. Hence, a positive integer is in $P(A)$ if and only if it is of the form $(de + f) pqr - ia - jb - kc$ for some positive integers $i$, $j$, and $k$. This implies the corollary since $\beta (\kappa_A) = P(A)$. 
\end{proof}

\section{Acknowledgements}
This research was conducted under the supervision of Joe Gallian at the University of Minnesota Duluth REU, funded by NSF Grant 1358659 and NSA Grant H98230-13-1-0273. The author heartily thanks Matt Fayers, Rishi Nath, and Joe Gallian for suggesting the topic of this project and for their valuable advice; the author also thanks David Moulton for his insightful discussions and Aaron Abrams and Richard Stanley for their suggestions.

\end{document}